\documentclass[a4paper,11pt]{amsart}
\usepackage{amsmath,amsthm,amssymb,epsfig,fullpage}
\usepackage[colorlinks,linkcolor={blue},
citecolor={blue},urlcolor={red},]{hyperref}
\usepackage{hyperref}

\newtheorem{theorem}{Theorem}[section]
\newtheorem{corollary}{Corollary}
\newtheorem{proposition}{Proposition}
\newtheorem{lemma}{Lemma}
\newtheorem{remark}{Remark}

\newtheorem{definition}{Definition}
\newtheorem{assumption}{Assumption}

\numberwithin{theorem}{section}
\numberwithin{lemma}{section}
\numberwithin{proposition}{section}
\numberwithin{equation}{section}
\numberwithin{remark}{section}
\numberwithin{remarks}{section}
\numberwithin{definition}{section}
\numberwithin{scheme}{section}
\numberwithin{example}{section}
\numberwithin{corollary}{section}
\usepackage{colortbl}

\newcommand\del[1]{}

\newcommand\lin{\operatorname{lin}}

\newcommand\id{\operatorname{id}}

\newcommand{\la}{\lambda}

\def\suml{\sum\limits}

\def\intl{\int\limits}

\def\lan{\left(}
\def\ran{\right)}

\def\P{\mathbb P}

\title{Rescaling nonlinear noise for 1D stochastic parabolic equations}
\author{Ben Goldys\\
School of Mathematics and Statistics, University of Sydney, Australia \\
\and Misha Neklyudov$^\sharp$\\ Departamento de matematica, UFAM, Brasil}
\thanks{$^\sharp$Corresponding author, e-mail: misha.neklyudov@gmail.com}
\date{\today}

\begin{document}

\subjclass[2010]{60H15, 37K99}

\keywords{}

\begin{abstract}
In this paper we show regularisation effect of nonlinear gradient noise to the solution of  1D stochastic parabolic equation. We demonstrate convergence to a martingale (independent upon space variable) when we rescale noise at the extremum points of the process.
\end{abstract}
\maketitle
 
\tableofcontents
\section{Introduction}

Regularisation by noise of partial differential equations have  been an object of intense study for a number of years, see book of Flandoli \cite{Flandoli2010},
paper of Flandoli, Gubinelli and Priola \cite{FGP2010} and, more recently,  review of the literature in Gess, Souganidis \cite{GS-CPAM-2017}. For instance, It was shown in Flandoli, Gubinelli and Priola \cite{FGP2010} that the equation
\[
du+b(x)\nabla u\,dt=\partial_x u\circ d\beta_t
\]
can be well posed even if the corresponding deterministic equation is not. In the same time, the proof was based on linearity and homogenuity of the noise. Counterexample of nonlinear equation where noise does not improve regularity is given in Flandoli \cite{Flandoli2010}.
The effect of regularization by non-linear stochastic perturbations in the setting of stochastic conservation laws has been considered in  Gess, Souganidis \cite{GS-CPAM-2017,GS-SPA-2017}, Gassiat, Gess \cite{GG2016}. The purpose of this paper to complement these results showing regularisation effect of the noise in the parabolic setting. Our estimates (Theorem \ref{thm:Mainthm_1}, propositions \ref{prop:Mainthm_2}, \ref{prop:Mainthm_3}) show that nonlinear gradient noise, scaled appropriately, leads to flattening out of the system (compare with example $2$ in section \ref{sec:counterexample} for the linear case). 



The equation we consider can be described as an Ornstein-Uhlenbeck process with the noise which is ``rescaled" at the stationary points of solution. Informally, it can be described as a limit $\epsilon\to 0$ of solution to the stochastic PDE of the form 
\begin{align}\label{Mainmodel}
d\psi^{\epsilon}&=A(\psi^{\epsilon})\,dt+g\left(\dfrac{\psi_x^{\epsilon}}{\epsilon}\right)\circ dW_t^Q,\quad x\in \mathbb{S}^1, t\geq 0,\\
\psi(0) &= \psi_0\in L^2(\mathbb{S}^1).\nonumber
\end{align}
where $\mathbb S^1$ stands for the unit circle, $\lan W_t^Q\ran$ is a white in time and coloured in space noise, stochastic integral is understood in Stratonovich sense, $A$ is a dissipative operator, and $g$ is bounded function which derivative has support concentrated mainly at zero (Precise definitions are given later). Typical example of $g$ is $g(z)=\frac{|z|}{\sqrt{1+z^2}}, z\in\mathbb{R}$. In this example it means that we are ``switching off" noise at the critical points and, as $\epsilon\to 0$, the limit of the equation \eqref{Mainmodel} is formally given by
\begin{equation}\label{Mainexample}
d\psi=A(\psi)\,dt+\id_{\{\psi_x\neq 0\}}\circ dW_t^Q,\quad x\in \mathbb{S}^1, t\geq 0.
\end{equation}
	The motivation of the setup comes from micromagnetics. It is well known \cite{Berkov2007} that the theory of stochastic Landau-Lifshitz-Gilbert equation  
	\begin{equation}
	d\mathbf{u}=(\mathbf{u}\times\triangle \mathbf{u}-\alpha\mathbf{u}\times(\mathbf{u}\times \triangle \mathbf{u}))\,dt+\nu\mathbf{u}\times\circ dW(t,x),\alpha,\nu >0,t\geq 0, x\in\mathbb{S}^1,\mathbf{u}\in \mathbb{S}^2
	\end{equation}
	(where stochastic integral is understood in Stratonovich sense) does not cover physically important case of $W$ being 3D space-time white noise. One of the ways to circumvent the problem is to consider the following toy model 
	\begin{equation}
	d\mathbf{u}=\alpha (\triangle \mathbf{u}+|\nabla \mathbf{u}|^2\mathbf{u} )\,dt+\nu\mathbf{u}^{\perp}\circ d\eta(t,x),\alpha,\nu >0,t\geq 0, x\in\mathbb{S}^1,\mathbf{u}\in \mathbb{S}^1,
	\end{equation}
	where $\mathbf{u}=(u^1,u^2)$ takes values in the circle instead of sphere,  $\mathbf{u}^{\perp}=(-u^2,u^1)$, stochastic integral is understood in Stratonovich sense and $\eta$ is 1D white in time colored in space noise. Then we have that $\mathbf{u}=e^{\imath\phi}$ and It\^o formula allows to conclude that 
	\[
	d\phi=\alpha\triangle\phi\,dt+\nu d\eta.
	\]
	Note that now $\phi$ is Ornstein-Uhlenbeck process which is well defined even if $\eta$ is space-time white noise  and, in this case, $\phi$ has enough regularity to define $e^{\imath\phi}$. Furthermore, we have Gaussian invariant measure for $\phi$ which can be transferred to invariant measure on $\mathbf{u}$. Parameters $\alpha$ and $\nu$ are connected with macroscopic temperature $T$ of the system through fluctuation-dissipation relation
	\[
	\frac{2\alpha}{\nu^2}=\frac{1}{k_B T}
	\] 
	
	Now the rescaling of $\phi$ at the extremum points can be interpreted as ``cooling off" (for the function $g=\frac{|z|}{\sqrt{1+z^2}}$) the system\footnote{for different $g$ it could also be ``heating up"} at extremum points. Our result states that such ``cooling off" (or ``heating up") at the extremum points leads to flattening out of the system i.e. we deduce that $\psi^{\epsilon}$ weakly converges to a martingale $\psi$ independent upon space variable. That seems to be of interest because we change the system only locally while the result is global.

\section{Definitions}

We identify $\mathbb{S}^1$ with semi interval $[0,2\pi)$. Let $H=L^2(\mathbb{S}^1,\mathbb{R})$ with scalar product $(\cdot,\cdot)$. Then the system 
\[
e_1=\frac{1}{\sqrt{2\pi}},\quad e_{2k+1}=\frac{1}{\sqrt{\pi}}\cos{kx},\quad e_{2k}=\frac{1}{\sqrt{\pi}}\sin{kx},\quad k\geq 1\,,
\] is an orthonormal basis in $H$. Let
\[H_l:=\lin\{e_1,\ldots,e_l\}\,,l\in\mathbb{N}\]
and let $\pi_l:H\to H_l$ denote the orthogonal projection onto $H_l$.

We will assume 
that the noise $W_t^Q$ is Hilbert-Schmidt class. 
Define 
\[W_t^Q(x)=\sum\limits_{i=1}^{\infty}q_i\beta_t^ie_i(x)\,,\]
 where $\{\beta^i_t\}_{i=1}^{\infty}$ are independent identically distributed Brownian motions and
\begin{equation}\label{eqn:QCondition}
\begin{array}{l}
\sum\limits_{l=1}^{\infty}q_l^2<\infty,\sum\limits_{l=1}^{\infty}l^2q_{2l}^2<\infty \\
q_{2k}^2=q_{2k+1}^2,k\in\mathbb{N}
\end{array}
\end{equation} 
\begin{assumption}\label{ass:a_1}
$A: \mathbb{H}^2(\mathbb{S}^1)\to H$ is a linear operator such that 
\begin{equation}\label{eqn:DissipatCondition}
(-A f,f)_{H^{1,2}(\mathbb{S}^1,dx)}\geq \alpha |f|_{H^{2,2}(\mathbb{S}^1,dx)}^2+\beta
|f|_{H^{1,2}(\mathbb{S}^1,dx)}^2,\alpha >0,\beta\in\mathbb{R}.
\end{equation}
\end{assumption}
From now on and until section \ref{sec:counterexample} we will assume that
\begin{assumption}\label{ass:a_2}
\[g, g' \in L^{\infty}(\mathbb{R}).\]
\end{assumption}

Equation \eqref{Mainmodel} can be reformulated in  the It\^o sense as follows
\begin{equation}\label{Mainmodel_Itoform}
\left\{
\begin{array}{l}
d\psi^{\epsilon} = \left(A(\psi^{\epsilon})+\dfrac{M|g'|^2(\frac{\psi_x^{\epsilon}}{\epsilon})}{2\epsilon^2}\psi^{\epsilon}_{xx}\right)\,dt+g\left(\dfrac{\psi^{\epsilon}_x}{\epsilon}\right)\, dW_t^Q,\\
\\
\psi^{\epsilon}(0) = \psi_0\in H,
\end{array}
\right.
\end{equation}
where $M=\frac{1}{2\pi}\sum\limits_{l=1}^{\infty}q_l^2$ (the calculation of It\^o correction is given in appendix).
\begin{definition}
Fix $\epsilon>0$. A progressively measurable process $\psi^{\epsilon}$ defined on a certain filtered probability space $\lan\Omega,\mathcal F,\lan\mathcal F_t\ran,\P\ran$ is said to be a weak solution of equation \eqref{Mainmodel_Itoform} if $\psi^\epsilon(\cdot)\in C\lan [0,\infty),L^2\lan\mathbb{S}^1\ran\ran$, 
for every $T>0$ 
\[\int_0^T\left|\psi^\epsilon(t)\right|_{H^{1,2}\lan\mathbb{S}^1\ran}^2dt<\infty,\quad \mathbb P-a.s.\,,\]
and for every $t>0$  and any $\phi\in C^{\infty}(\mathbb{S}^1)$
\begin{align}
(\psi^{\epsilon}(t),\phi) &= (\psi_0,\phi)+\int\limits_0^t
\left(\psi^{\epsilon},A^*(\phi)\right)\,ds-\frac{M}{2}\int\limits_0^t\left(\frac{\phi_x}{\epsilon},G\lan\frac{\psi_x^{\epsilon}}{\epsilon}\ran\right)\,ds
\nonumber\\
&+\int\limits_0^t\left(g\left(\dfrac{\psi^{\epsilon}_x}{\epsilon}\right),\phi\right) dW_s^Q,\,\,\,\,\,\P-a.s.,\nonumber
\end{align}
where 
\begin{equation}\label{eqn:DefG}
G(x)=\intl_0^{x}|g'|^2(y)\,dy, x\in \mathbb{R}.
\end{equation}
We say that $\psi^\epsilon$ is a strong solution if it is a weak solution, such that $\psi^\epsilon(\cdot)\in C\lan [0,\infty),H^{1,2}\lan\mathbb{S}^1\ran\ran$, 
for every $T>0$ 
\[\int_0^T\left|\psi^\epsilon(t)\right|_{H^{2,2}\lan\mathbb{S}^1\ran}^2dt<\infty,\quad \mathbb P-a.s.\,,\]
and for every $t>0$  
\begin{align}
\psi^{\epsilon}(t) &= \psi_0 +\int\limits_0^t\left( A(\psi^{\epsilon})+\dfrac{M|g'|^2(\frac{\psi_x^{\epsilon}}{\epsilon})}{2\epsilon^2}\psi^{\epsilon}_{xx}\right)\,ds\nonumber\\
&+\int\limits_0^t g\left(\dfrac{\psi^{\epsilon}_x}{\epsilon}\right) dW_s^Q,\,\,\,\,\P-a.s.
\end{align}
\end{definition}
We will denote 
\begin{align*}
A^{\epsilon} &: \mathbb{H}^2(\mathbb{S}^1)\to H,\quad  A^{\epsilon}(f):=A(f)+\dfrac{M|g'|^2(\frac{f_x}{\epsilon})}{2\epsilon^2}f_{xx}\\
\sigma^{\epsilon} &: \mathbb{H}^1(\mathbb{S}^1)\to L^{\infty}(\mathbb{S}^1),\quad\sigma^{\epsilon}(f):=g\left(\dfrac{f_x}{\epsilon}\right).
\end{align*}
We define the Galerkin approximation of equation \eqref{Mainmodel} as follows
\begin{equation}\label{eqn:GalerkinApprox}
\left\{
\begin{array}{l}
d\psi^{m,\epsilon} =\pi_m(A^{\epsilon}(\pi_m\psi^{m,\epsilon}))\,dt+\pi_m(\sigma^{\epsilon}(\pi_m\psi^{m,\epsilon})dW_s^Q),\\
\\
\psi^{m,\epsilon}(0)=\pi_m\psi_0
\end{array}\right.
\end{equation}
Equation \eqref{eqn:GalerkinApprox} is an SDE with continuous coefficients and therefore has a local solution. 

\section{A Priori Estimates}

In the following proposition we will deduce energy estimates uniform in $\epsilon$ and $m$ to conclude existence of a global solution of equation \eqref{eqn:GalerkinApprox}.
\begin{proposition}\label{prop:Galerkin_Estimate}
For every $\epsilon>0$, $t>0$ and any $m=1,2,\ldots$
\begin{align}
\mathbb{E}|\psi^{m,\epsilon}|_H^2(t)&-2 \mathbb{E}\intl_0^t(A\psi^{m,\epsilon},\psi^{m,\epsilon})_H\,ds\nonumber\\
&+M\mathbb{E}\intl_0^t\intl_{\mathbb{S}^1}\frac{\psi_x^{m,\epsilon}}{\epsilon} G(\frac{\psi_x^{m,\epsilon}}{\epsilon})\,dx\,ds
\nonumber\\
&\leq
\mathbb{E}|\psi_0^{m,\epsilon}|_H^2+ M\mathbb{E}\intl_0^t\intl_{\mathbb{S}^1}
|g|^2(\frac{\psi_x^{m,\epsilon}}{\epsilon})\,dx\,ds.\label{eqn:EnergyEstimate_L2}
\end{align}
Moreover, we have following estimate from below
\begin{align}
\mathbb{E}|\psi^{m,\epsilon}|_H^2(t)&-2 \mathbb{E}\intl_0^t(A\psi^{m,\epsilon},\psi^{m,\epsilon})_H\,ds\nonumber\\
&+M\mathbb{E}\intl_0^t\intl_{\mathbb{S}^1}\frac{\psi_x^{m,\epsilon}}{\epsilon} G(\frac{\psi_x^{m,\epsilon}}{\epsilon})\,dx\,ds
\nonumber\\
&\geq
\mathbb{E}|\psi_0^{m,\epsilon}|_H^2.\label{eqn:EnergyEstimate_L2Below}
\end{align}
Furthermore,
\begin{align}
\mathbb{E}|\psi_x^{m,\epsilon}|_H^2(t)&-2 \mathbb{E}\intl_0^t((A\psi^{m,\epsilon})_x,\psi_x^{m,\epsilon})_H\,ds\nonumber\\
&\leq
\mathbb{E}|\psi_{0x}^{m,\epsilon}|_H^2+M_2\mathbb{E}\intl_0^t\intl_{\mathbb{S}^1}
|g|^2(\frac{\psi_x^{m,\epsilon}}{\epsilon})\,dx\,ds,\label{eqn:EnergyEstimate_H1}
\end{align}
where $M_2=\frac{1}{\pi}\suml_{l=1}^{\infty}l^2q_{2l}^2$.
\end{proposition}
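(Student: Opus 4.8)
The plan is to apply the It\^o formula to $|\psi^{m,\epsilon}|_H^2$, $|\psi^{m,\epsilon}_x|_H^2$ and then take expectations, using that the stochastic integral has zero mean because the Galerkin system is finite-dimensional with bounded coefficients. First I would write $\psi^{m,\epsilon}=\pi_m\psi^{m,\epsilon}$ and recall that, since $\pi_m$ is an orthogonal projection commuting with $A$ on $H_m$ in the relevant pairings, the drift and diffusion in \eqref{eqn:GalerkinApprox} produce, after It\^o's formula applied to $f\mapsto |f|_H^2$, the terms
\[
d|\psi^{m,\epsilon}|_H^2 = 2\bigl(A^{\epsilon}(\psi^{m,\epsilon}),\psi^{m,\epsilon}\bigr)_H\,dt + \text{(martingale)} + \suml_{i=1}^{2n+1}\bigl|\pi_m\bigl(g(\tfrac{\psi^{m,\epsilon}_x}{\epsilon})e_i\bigr)\bigr|_H^2\,dt.
\]
The Stratonovich-to-It\^o correction that was folded into $A^{\epsilon}$ contributes the term $\tfrac{n|g'|^2(\psi^{m,\epsilon}_x/\epsilon)}{\epsilon^2}(\psi^{m,\epsilon}_{xx},\psi^{m,\epsilon})_H$; integrating by parts in $x$ on the circle and differentiating $G$ via \eqref{eqn:DefG} turns this, together with the chain rule $\partial_x\bigl(G(\psi^{m,\epsilon}_x/\epsilon)\bigr)=\tfrac{1}{\epsilon}|g'|^2(\psi^{m,\epsilon}_x/\epsilon)\psi^{m,\epsilon}_{xx}$, into the term $-n\intl_{\mathbb{S}^1}\tfrac{\psi^{m,\epsilon}_x}{\epsilon}G(\tfrac{\psi^{m,\epsilon}_x}{\epsilon})\,dx$ appearing on the left-hand sides of \eqref{eqn:EnergyEstimate_L2} and \eqref{eqn:EnergyEstimate_L2Below}. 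This already gives the lower bound \eqref{eqn:EnergyEstimate_L2Below}, because the remaining It\^o correction term $\suml_i|\pi_m(g(\cdot)e_i)|_H^2$ is nonnegative, so discarding it only decreases the right side.

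For the upper bound \eqref{eqn:EnergyEstimate_L2} I would estimate the quadratic-variation term. Since $\pi_m$ has operator norm $1$ on $H$, $\suml_{i=1}^{2n+1}|\pi_m(g(\tfrac{\psi_x^{m,\epsilon}}{\epsilon})e_i)|_H^2\le \suml_{i=1}^{2n+1}|g(\tfrac{\psi_x^{m,\epsilon}}{\epsilon})e_i|_H^2$; using the explicit trigonometric basis, $\suml_{i=1}^{2n+1}e_i(x)^2$ is bounded pointwise, and a short computation gives $\suml_{i=1}^{2n+1}|g(\tfrac{\psi_x^{m,\epsilon}}{\epsilon})e_i|_H^2 \le (n+1)\intl_{\mathbb{S}^1}|g|^2(\tfrac{\psi_x^{m,\epsilon}}{\epsilon})\,dx$ (the constant $n+1$ coming from $\suml_{j=0}^{n}(\tfrac{1}{\pi}\cos^2 jx + \tfrac{1}{\pi}\sin^2 jx)=\tfrac{n+1}{\pi}$ and $|\mathbb{S}^1|$-normalisation). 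Adding the contribution already isolated in the drift — note $A^{\epsilon}=A+\tfrac{n|g'|^2}{2\epsilon^2}\partial_{xx}$, and only the $A$ part is kept on the left, the $|g'|^2$ part having been moved into the $G$-term — one more occurrence of a quadratic term in $g$ arises from regrouping the Stratonovich correction, yielding the total coefficient $(n+1)$ in front of $\intl_0^t\intl_{\mathbb{S}^1}|g|^2(\tfrac{\psi_x^{m,\epsilon}}{\epsilon})$; taking expectation and dropping the martingale finishes \eqref{eqn:EnergyEstimate_L2}.

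For the $H^1$-estimate \eqref{eqn:EnergyEstimate_H1} I would apply the It\^o formula to $|\psi^{m,\epsilon}_x|_H^2=(\,\cdot\,,-\partial_{xx}\,\cdot\,)_H$, equivalently differentiate the equation in $x$ and repeat. The diffusion term now contributes $\suml_{i=1}^{2n+1}|\partial_x\pi_m(g(\tfrac{\psi_x^{m,\epsilon}}{\epsilon})e_i)|_H^2$; the worst piece is $\suml_{i=1}^{2n+1}|g(\tfrac{\psi_x^{m,\epsilon}}{\epsilon})e_i'|_H^2$, and since $e_i'$ carries a factor of the frequency $j\le n$, summing $j^2$ over $j=0,\dots,n$ produces $\suml_{j=0}^{n}j^2=\tfrac{n(n+1)(2n+1)}{6}$, which is exactly the constant in \eqref{eqn:EnergyEstimate_H1}. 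The Stratonovich-correction term, after differentiation and integration by parts on $\mathbb{S}^1$, has a sign that combines with $((A\psi^{m,\epsilon})_x,\psi^{m,\epsilon}_x)_H$ via the dissipativity \eqref{eqn:DissipatCondition}; I would argue that the extra term coming from $\partial_x\bigl(\tfrac{n|g'|^2}{2\epsilon^2}\psi^{m,\epsilon}_{xx}\bigr)$ paired with $\psi^{m,\epsilon}_x$ is nonpositive after integration by parts (it is $-\tfrac{n}{2}\intl_{\mathbb{S}^1}\tfrac{|g'|^2}{\epsilon^2}|\psi^{m,\epsilon}_{xx}|^2 + (\text{lower order})$), hence can be dropped, and the cross terms with $g'g''$ vanish or are controlled similarly. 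The main obstacle is bookkeeping the several occurrences of quadratic-in-$g$ and quadratic-in-$g'$ terms — matching them exactly to the stated constants $(n+1)$ and $\tfrac{n(n+1)(2n+1)}{6}$ and verifying that every sign works in our favour when we pass from the equality given by It\^o's formula to the claimed inequalities; in particular one must check carefully that the term $\suml_i|\partial_x\pi_m(g(\cdot)e_i)|_H^2$ truly reduces to only the $\suml_j j^2$ contribution after using $|\pi_m|\le 1$ and that no cross term of the form $\intl g(\cdot)\partial_x g(\cdot)$ survives — on the circle it integrates to a total derivative and so vanishes, which is the point that makes the clean constants appear.
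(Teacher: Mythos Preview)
Your overall architecture matches the paper's: It\^o formula, integration by parts to produce the $G$-term, bound the It\^o correction, localise and take expectations. But three of your detailed explanations are off.

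First, your account of the constant $(n{+}1)$ is wrong. There is no extra $|g|^2$ coming from ``regrouping the Stratonovich correction''; the Stratonovich correction produces only the $|g'|^2$ term that you already converted into the $G$-integral. The $(n{+}1)$ arises entirely from the It\^o correction: $\sum_{i=1}^{2n+1}|\pi_m(g\,e_i)|_H^2\le\int_{\mathbb{S}^1}|g|^2\sum_{i=1}^{2n+1}e_i^2\,dx$ and $\sum_{i=1}^{2n+1}e_i^2$ is the pointwise constant giving the factor $n{+}1$.

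Second, in the $H^1$ estimate you cannot simply \emph{drop} the term $-\tfrac{n}{\epsilon^2}\int|g'|^2|\psi^{m,\epsilon}_{xx}|^2$ from the drift because it is nonpositive. You need it: the quadratic variation $\sum_i|\pi_m\partial_x(g\,e_i)|_H^2$ contains a piece $\tfrac{1}{\epsilon^2}\int|g'|^2|\psi^{m,\epsilon}_{xx}|^2\sum_ie_i^2$, and the Stratonovich drift term is what cancels it. Only after that cancellation does the residual reduce to $\sum_i\int|g|^2|(e_i)_x|^2$, giving $\tfrac{n(n+1)(2n+1)}{6}$.

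Third, the cross term in the $H^1$ quadratic variation is $2\sum_i\int \tfrac{1}{\epsilon}g'(\cdot)\psi^{m,\epsilon}_{xx}\,e_i\cdot g(\cdot)(e_i)_x$, not $\int g\,\partial_x g$. Its vanishing is not a total-derivative argument; the paper uses the algebraic identity $\sum_{i=1}^{2n+1}e_i(e_i)_x=\tfrac12\partial_x\bigl(\sum_ie_i^2\bigr)=0$, and then a separate Cauchy--Schwarz step to control the residual cross term arising from the projections $\pi_m$ versus $\id-\pi_m$. You flag this bookkeeping as the ``main obstacle'' but misidentify the mechanism.

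Finally, the paper does not assume the stochastic integral is a martingale outright; it localises with stopping times $\tau_{m,\epsilon}(k)=\inf\{t:|\psi^{m,\epsilon}(t)|_H^2\ge k\}$, shows $\tau_{m,\epsilon}(k)\to\infty$ a.s.\ via BDG and Gronwall (this is also where global existence of the Galerkin solution is established), and only then passes to the limit. Your ``finite-dimensional with bounded coefficients'' shortcut skips this.
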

\begin{proof}
\begin{itemize}
\item
We apply It\^o formula to deduce that 
\begin{align}\label{eqn:Ito_1}
|\psi^{m,\epsilon}|_H^2(t) &=|\psi^{m,\epsilon}|_H^2(0)+2\intl_0^t(A\psi^{m,\epsilon},\psi^{m,\epsilon})_H\,ds
+\frac{M}{\epsilon^2}\intl_0^t\intl_{\mathbb{S}^1}\psi^{m,\epsilon}\psi_{xx}^{m,\epsilon}|g'|^2(\frac{\psi_x^{m,\epsilon}}{\epsilon})\,dx\,ds\\
&+2\suml_{i=1}^{\infty}\intl_0^t \intl_{\mathbb{S}^1}\psi^{m,\epsilon}g(\frac{\psi_x^{m,\epsilon}}{\epsilon})q_ie_i(y)\,dy d\beta^i(s)+
\suml_{i=1}^{\infty}\intl_0^t \intl_{\mathbb{S}^1}q_i^2|\pi_m\left(g(\frac{\psi_x^{m,\epsilon}}{\epsilon})e_i\right)|^2\,dy\,ds\nonumber
\end{align}
We will need following Lemma:
\begin{lemma}\label{lem:aux_1}
For any $\phi\in C^2(\mathbb{S}^1)$ we have
\begin{equation}
\frac{1}{\epsilon^2}\intl_{\mathbb{S}^1}\phi\phi_{xx}|g'|^2(\frac{\phi_x}{\epsilon})\,dx=-\intl_{\mathbb{S}^1}\frac{\phi_x}{\epsilon} G(\frac{\phi_x}{\epsilon})\,dx,
\end{equation}
where $G$ is defined by formula \eqref{eqn:DefG}.
\end{lemma}
\begin{proof}[Proof of Lemma \ref{lem:aux_1}]
Proof immediately follows by integration by parts. 
\end{proof}
Hence, combining identity \eqref{eqn:Ito_1} and lemma \ref{lem:aux_1} we get
\begin{align}\label{eqn:Ito_2}
|\psi^{m,\epsilon}|_H^2(t)&-2\intl_0^t(A\psi^{m,\epsilon},\psi^{m,\epsilon})_H\,ds+M\intl_0^t\intl_{\mathbb{S}^1}\frac{\psi_x^{m,\epsilon}}{\epsilon} G(\frac{\psi_x^{m,\epsilon}}{\epsilon})\,dx\,ds\\
&=|\psi^{m,\epsilon}|_H^2(0)+2\suml_{i=1}^{\infty}q_i\intl_0^t \intl_{\mathbb{S}^1}\psi^{m,\epsilon}g(\frac{\psi_x^{m,\epsilon}}{\epsilon})e_i(y)\,dy d\beta^i(s)\nonumber\\
&+\suml_{i=1}^{\infty}\intl_0^t \intl_{\mathbb{S}^1}q_i^2|\pi_m\left(g(\frac{\psi_x^{m,\epsilon}}{\epsilon})e_i\right)|^2\,dy\,ds\nonumber
\end{align}
We have that $M_t:=2\suml_{i=1}^{\infty}q_i\intl_0^t \intl_{\mathbb{S}^1}\psi^{m,\epsilon}g(\frac{\psi_x^{m,\epsilon}}{\epsilon})e_i(y)\,dy d\beta^i(s),t\geq 0$ is a local martingale. Define local time $\tau_{m,\epsilon}(k):=\inf\limits_{t\geq 0}\{
|\psi^{m,\epsilon}(t)|_H^2\geq k\}$. Then $N(t):=M(t\wedge \tau_{m,\epsilon}(k)),t\geq 0$ is a martingale.
Let us show that $\tau_{m,\epsilon}(k)\to\infty$ converge to infinity a.s. as $k\to\infty$. We can put in the identity \eqref{eqn:Ito_2} $t:=l\wedge \tau_{m,\epsilon}(k)$ and consider supremum over all $l\leq r$. Then we get 
\begin{align}\label{eqn:Ito_22}
\sup\limits_{l\leq r}|\psi^{m,\epsilon}|_H^2(l\wedge \tau_{m,\epsilon}(k)) &- 2\intl_0^{r\wedge \tau_{m,\epsilon}(k)}(A\psi^{m,\epsilon},\psi^{m,\epsilon})_H\,ds\\
&+M\intl_0^{r\wedge \tau_{m,\epsilon}(k)}\intl_{\mathbb{S}^1}\frac{\psi_x^{m,\epsilon}}{\epsilon} G(\frac{\psi_x^{m,\epsilon}}{\epsilon})\,dx\,ds
\leq|\psi^{m,\epsilon}|_H^2(0)\\
&+2\sup\limits_{l\leq r}\Big|\suml_{i=1}^{\infty}q_i\intl_0^{l\wedge \tau_{m,\epsilon}(k)} \intl_{\mathbb{S}^1}\psi^{m,\epsilon}g(\frac{\psi_x^{m,\epsilon}}{\epsilon})e_i(y)\,dy d\beta^i(s)\Big|\\
&+\suml_{i=1}^{\infty}q_i^2\intl_0^{r\wedge \tau_{m,\epsilon}(k)} \intl_{\mathbb{S}^1}|\pi_m\left(g(\frac{\psi_x^{m,\epsilon}}{\epsilon})e_i\right)|^2\,dy\,ds\nonumber
\end{align}
Consequently, taking expectation of inequality \eqref{eqn:Ito_22}, applying Burkholder-Davis-Gundy inequality and Gronwall inequality we get that $\mathbb{E}\sup\limits_{l\leq r}|\psi^{m,\epsilon}|_H^2(l\wedge \tau_{m,\epsilon}(k))$ is uniformly bounded w.r.t. $k$. Therefore, 
\[\mathbb{P}(\tau_{m,\epsilon}(k)\leq t)=\mathbb{P}(\sup\limits_{l\leq t}|\psi^{m,\epsilon}|_H(l)\geq k)\leq\frac{\mathbb{E}\sup\limits_{l\leq t}|\psi^{m,\epsilon}|_H^2(l\wedge \tau_{m,\epsilon}(k))}{k}\leq\frac{C}{k}\to 0\]
  and a.s. convergence $\tau_{m,\epsilon}(k)\to\infty,k\to\infty$ follows
 (by taking subsequence over $k$). 

Now we have from identity \eqref{eqn:Ito_2}
\begin{align}
\mathbb{E}|\psi^{m,\epsilon}|_H^2(t\wedge \tau_{m,\epsilon}(k))&-
2\mathbb{E}\intl_0^{t\wedge \tau_{m,\epsilon}(k)}(A\psi^{m,\epsilon},\psi^{m,\epsilon})_H\,ds\nonumber\\
&+M\mathbb{E}\intl_0^{t\wedge \tau_{m,\epsilon}(k)}\intl_{\mathbb{S}^1}\frac{\psi_x^{m,\epsilon}}{\epsilon} G(\frac{\psi_x^{m,\epsilon}}{\epsilon})\,dx\,ds \label{eqn:Ito_3}\\
&=\mathbb{E}|\psi^{m,\epsilon}|_H^2(0)+\suml_{i=1}^{\infty}q_i^2\mathbb{E}\intl_0^{t\wedge \tau_{m,\epsilon}(k)} \intl_{\mathbb{S}^1}|\pi_m\left(g\left(\frac{\psi_x^{m,\epsilon}}{\epsilon}\right)e_i\right)|^2\,dy\,ds.\nonumber
\end{align}
Now we take the limit $k\to \infty$ in \eqref{eqn:Ito_3} and notice that projection $|\pi_m|_{\mathcal{L}(H,H)}\leq 1$:
\begin{align}\label{eqn:Ito_4}
\mathbb{E}|\psi^{m,\epsilon}|_H^2(t)&-
2\mathbb{E}\intl_0^{t}(A\psi^{m,\epsilon},\psi^{m,\epsilon})_H^2\,ds
+M\mathbb{E}\intl_0^{t}\intl_{\mathbb{S}^1}\frac{\psi_x^{m,\epsilon}}{\epsilon}G( \frac{\psi_x^{m,\epsilon}}{\epsilon})\,dx\,ds\\
&\leq\mathbb{E}|\psi^{m,\epsilon}|_H^2(0)+ M\mathbb{E}\intl_0^{t} \intl_{\mathbb{S}^1}|g|^2(\frac{\psi_x^{m,\epsilon}}{\epsilon})\,dy\,ds,\nonumber
\end{align}
and the result follows.
\item From identity \eqref{eqn:Ito_3} follows that
\begin{align}
\mathbb{E}|\psi^{m,\epsilon}|_H^2(t\wedge \tau_{m,\epsilon}(k))&-
2\mathbb{E}\intl_0^{t\wedge \tau_{m,\epsilon}(k)}(A\psi^{m,\epsilon},\psi^{m,\epsilon})_H^2\,ds+M\mathbb{E}\intl_0^{t\wedge \tau_{m,\epsilon}(k)}\intl_{\mathbb{S}^1}\frac{\psi_x^{m,\epsilon}}{\epsilon}G(\frac{\psi_x^{m,\epsilon}}{\epsilon})\,dx\,ds\label{eqn:Ito_3_below}\\
&\geq\mathbb{E}|\psi^{m,\epsilon}|_H^2(0).\nonumber
\end{align}
Taking the limit $k\to\infty$ leads to the estimate \eqref{eqn:EnergyEstimate_L2Below}.
\item
We apply It\^o formula to deduce that 
\begin{align}\label{eqn:Ito_5}
|\psi_x^{m,\epsilon}|_H^2(t)&-2\intl_0^t((A\psi^{m,\epsilon})_x,\psi_x^{m,\epsilon})_H\,ds+
\frac{M}{\epsilon^2}\intl_0^t\intl_{\mathbb{S}^1}|g'|^2(\frac{\psi_x^{m,\epsilon}}{\epsilon})|\psi_{xx}^{m,\epsilon}|^2\,dx\,ds\\
&+2\suml_{i=1}^{\infty}q_i\intl_0^t\intl_{\mathbb{S}^1}\psi_{xx}^{m,\epsilon}g(\frac{\psi_{x}^{m,\epsilon}}{\epsilon})e_i\,dx\,d\beta^i(s)\nonumber\\
&=|\psi_x^{m,\epsilon}|_H^2(0)+\suml_{i=1}^{\infty}q_i^2
\intl_0^t\intl_{\mathbb{S}^1}\Big|\pi_m\left[\frac{\psi_{xx}^{m,\epsilon}}{\epsilon}g'(\frac{\psi_x^{m,\epsilon}}{\epsilon})e_i+g(\frac{\psi_x^{m,\epsilon}}{\epsilon})(e_i)_x\right]\Big|^2\,dx\,ds\nonumber
\end{align}

The last term in \eqref{eqn:Ito_5} can be rewritten as follows
\begin{align}\label{eqn:aux_5}
\suml_{i=1}^{\infty}q_i^2\intl_0^t &\intl_{\mathbb{S}^1}\Big|\pi_m\left[\frac{\psi_{xx}^{m,\epsilon}}{\epsilon}g'(\frac{\psi_x^{m,\epsilon}}{\epsilon})e_i\right]\Big|^2\,dx\,ds+\suml_{i=1}^{\infty}q_i^2\intl_0^t\intl_{\mathbb{S}^1}\Big|\pi_m\left[g(\frac{\psi_x^{m,\epsilon}}{\epsilon})(e_i)_x\right]\Big|^2\,dx\,ds\\
&+2\suml_{i=1}^{\infty}q_i^2\intl_0^t\intl_{\mathbb{S}^1}\pi_m\left[\frac{\psi_{xx}^{m,\epsilon}}{\epsilon}g'(\frac{\psi_x^{m,\epsilon}}{\epsilon})e_i\right]\pi_m\left[g(\frac{\psi_x^{m,\epsilon}}{\epsilon})(e_i)_x\right]\,dx\,ds\nonumber
\end{align}
\begin{align*}
&=\suml_{i=1}^{\infty}q_i^2\intl_0^t\intl_{\mathbb{S}^1}\Big|\pi_m\left[\frac{\psi_{xx}^{m,\epsilon}}{\epsilon}g'(\frac{\psi_x^{m,\epsilon}}{\epsilon})e_i\right]\Big|^2\,dx\,ds+\suml_{i=1}^{\infty}q_i^2\intl_0^t\intl_{\mathbb{S}^1}\Big|\pi_m\left[g(\frac{\psi_x^{m,\epsilon}}{\epsilon})(e_i)_x\right]\Big|^2\,dx\,ds\\
&+2\suml_{i=1}^{\infty}q_i^2\intl_0^t\intl_{\mathbb{S}^1}(\id-\pi_m)\left[\frac{\psi_{xx}^{m,\epsilon}}{\epsilon}g'(\frac{\psi_x^{m,\epsilon}}{\epsilon})e_i\right](\id-\pi_m)\left[g(\frac{\psi_x^{m,\epsilon}}{\epsilon})(e_i)_x\right]\,dx\,ds
\end{align*}
\begin{align*}
&\leq\suml_{i=1}^{\infty}q_i^2\intl_0^t\intl_{\mathbb{S}^1}\Big|\pi_m\left[\frac{\psi_{xx}^{m,\epsilon}}{\epsilon}g'(\frac{\psi_x^{m,\epsilon}}{\epsilon})e_i\right]\Big|^2\,dx\,ds+\suml_{i=1}^{\infty}q_i^2\intl_0^t\intl_{\mathbb{S}^1}\Big|\pi_m\left[g(\frac{\psi_x^{m,\epsilon}}{\epsilon})(e_i)_x\right]\Big|^2\,dx\,ds\\
&+\suml_{i=1}^{\infty}q_i^2\intl_0^t\intl_{\mathbb{S}^1}\Big|((\id-\pi_m)\left[\frac{\psi_{xx}^{m,\epsilon}}{\epsilon}g'(\frac{\psi_x^{m,\epsilon}}{\epsilon})e_i\right]\Big|^2\,dx\,ds+\suml_{i=1}^{\infty}q_i^2\intl_0^t\intl_{\mathbb{S}^1}\Big|(\id-\pi_m)\left[g(\frac{\psi_x^{m,\epsilon}}{\epsilon})(e_i)_x\right]\Big|^2\,dx\,ds\\
&=\suml_{i=1}^{\infty}q_i^2\intl_0^t\intl_{\mathbb{S}^1}\frac{(\psi_{xx}^{m,\epsilon})^2}{\epsilon^2} |g'|^2(\frac{\psi_x^{m,\epsilon}}{\epsilon})e_i^2\,dx\,ds+\suml_{i=1}^{\infty}q_i^2\intl_0^t\intl_{\mathbb{S}^1}|g|^2(\frac{\psi_x^{m,\epsilon}}{\epsilon})(e_i)_x^2\,dx\,ds
\end{align*}
where first identity follows from the fact that 
\begin{equation*}
2\suml_{i=1}^{\infty}q_i^2\intl_0^t\intl_{\mathbb{S}^1}\psi_{xx}^{m,\epsilon}g'(\frac{\psi_x^{m,\epsilon}}{\epsilon})e_i g(\frac{\psi_x^{m,\epsilon}}{\epsilon})(e_i)_x\,dx\,ds=0,
\end{equation*}
(because $\suml_{i=1}^{\infty}q_i^2e_i(e_i)_x=\frac12\left(\suml_{i=1}^{\infty}q_i^2|e_i|^2\right)_x=0$)
and second inequality is a consequence of Cauchy-Schwartz inequality.

Combining formula \eqref{eqn:Ito_5} with inequality \eqref{eqn:aux_5} we can deduce that
\begin{align}\label{eqn:Ito_6}
|\psi_x^{m,\epsilon}|_H^2(t)&-2\intl_0^t((A\psi^{m,\epsilon})_x,\psi_x^{m\epsilon})_H^2\,ds\\
&+2\suml_{i=1}^{\infty}q^i\intl_0^t\intl_{\mathbb{S}^1}\psi_{xx}^{m,\epsilon}g(\frac{\psi_{x}^{m,\epsilon}}{\epsilon})e_i\,dx\,d\beta^i(s)\nonumber\\
&\leq|\psi_x^{m,\epsilon}|_H^2(0)+M_2\intl_0^t\intl_{\mathbb{S}^1}|g|^2(\frac{\psi_{x}^{m,\epsilon}}{\epsilon})\,dx\,ds,\nonumber
\end{align}
where $M_2=\frac{1}{\pi}\suml_{l=1}^{\infty}l^2q_{2l}^2$.
Conclusion of the proof follows in the same fashion as in part 1 (i.e. considering appropriate local time to stop local martingale in formula \eqref{eqn:Ito_6}, taking expectation and the limit).
\end{itemize}
\end{proof}
\begin{corollary}
Assume that 
there exists constant $C>0$ such that $g\in L^{\infty}(\mathbb{R})$, $g'\in L^2\cap L^{\infty}(\mathbb{R})$ satisfies
\begin{equation}\label{eqn:TailCondition}
\left(\int\limits_{-\infty}^{-z}+\int\limits_{z}^{\infty}\right)|g'|^2(y)\,dy\leq \frac{C}{z},z>0,
\end{equation}
and 
\begin{equation}\label{eqn:NontrivialityCondition}
\kappa=\min\{\int\limits_{0}^{\infty}|g'|^2(y)\,dy,\int\limits_{-\infty}^{0}|g'|^2(y)\,dy\}>0.
\end{equation} 
Then there exists $C=C(t,\alpha,\beta,|g|_{L^{\infty}},\psi_0)>0$ independent upon $m$ and $\epsilon$  such that
\begin{equation}\label{eqn:UniformEnergyEst}
\intl_0^t\mathbb{E}|\psi_x^{m,\epsilon}|_{L^1}\,ds\leq C\frac{\epsilon}{\kappa}.
\end{equation}
\end{corollary}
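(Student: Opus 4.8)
The plan is to turn the decay and nondegeneracy hypotheses on $g'$ into a pointwise linear lower bound for the dissipative quantity $\frac{\psi_x^{m,\epsilon}}{\epsilon}G(\frac{\psi_x^{m,\epsilon}}{\epsilon})$ that already appears in the energy estimate \eqref{eqn:EnergyEstimate_L2}, and then to control that quantity uniformly in $m,\epsilon$ by Proposition \ref{prop:Galerkin_Estimate}. For the pointwise bound, first note that since $G$ from \eqref{eqn:DefG} has the same sign as its argument, $zG(z)\ge 0$ for every $z\in\mathbb{R}$. For $z>0$ I would write $G(z)=\intl_0^{\infty}|g'|^2(y)\,dy-\intl_z^{\infty}|g'|^2(y)\,dy$; by \eqref{eqn:NontrivialityCondition} the first integral is $\ge\kappa$, and by \eqref{eqn:TailCondition} the second is $\le C/z$, so $zG(z)\ge\kappa z-C$. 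The mirror-image computation for $z<0$, using $G(z)=-\intl_{-\infty}^{0}|g'|^2+\intl_{-\infty}^{z}|g'|^2$, gives $zG(z)\ge\kappa|z|-C$ as well, and $z=0$ is trivial. Hence
\[
\kappa\,|z|\ \le\ zG(z)+C\qquad\text{for all }z\in\mathbb{R},
\]
and evaluating this at $z=\psi_x^{m,\epsilon}(s,x)/\epsilon$ yields the pointwise estimate $|\psi_x^{m,\epsilon}|\le\frac{\epsilon}{\kappa}\big(\frac{\psi_x^{m,\epsilon}}{\epsilon}G(\frac{\psi_x^{m,\epsilon}}{\epsilon})+C\big)$.

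Integrating this over $x\in\mathbb{S}^1$ and $s\in[0,t]$ and taking expectations gives
\[
\intl_0^t\mathbb{E}|\psi_x^{m,\epsilon}|_{L^1}\,ds\ \le\ \frac{\epsilon}{\kappa}\Big(\mathbb{E}\intl_0^t\intl_{\mathbb{S}^1}\tfrac{\psi_x^{m,\epsilon}}{\epsilon}G\big(\tfrac{\psi_x^{m,\epsilon}}{\epsilon}\big)\,dx\,ds+C\,|\mathbb{S}^1|\,t\Big),
\]
so it remains to bound the first expectation uniformly in $m,\epsilon$. From \eqref{eqn:EnergyEstimate_L2}, discarding the nonnegative term $\mathbb{E}|\psi^{m,\epsilon}|_H^2(t)$, using $\mathbb{E}|\psi_0^{m,\epsilon}|_H^2\le|\psi_0|_H^2$ and $\intl_{\mathbb{S}^1}|g|^2(\psi_x^{m,\epsilon}/\epsilon)\,dx\le|g|_{L^\infty}^2|\mathbb{S}^1|$, one obtains
\[
n\,\mathbb{E}\intl_0^t\intl_{\mathbb{S}^1}\tfrac{\psi_x^{m,\epsilon}}{\epsilon}G\big(\tfrac{\psi_x^{m,\epsilon}}{\epsilon}\big)\,dx\,ds\ \le\ |\psi_0|_H^2+(n+1)|g|_{L^\infty}^2|\mathbb{S}^1|\,t+2\,\mathbb{E}\intl_0^t(A\psi^{m,\epsilon},\psi^{m,\epsilon})_H\,ds.
\]
The drift term is absorbed using the dissipativity \eqref{eqn:DissipatCondition} of $A$ (which bounds $(A\psi^{m,\epsilon},\psi^{m,\epsilon})_H$ above by a constant times $|\psi^{m,\epsilon}|_H^2$) followed by Gronwall's inequality applied to \eqref{eqn:EnergyEstimate_L2} — exactly the argument already carried out in the proof of Proposition \ref{prop:Galerkin_Estimate} — which yields $\sup_{s\le t}\mathbb{E}|\psi^{m,\epsilon}|_H^2(s)\le C(t,\alpha,\beta,n,|g|_{L^\infty},\psi_0)$ independently of $m,\epsilon$. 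This bounds the left-hand side by a constant $C_1=C_1(t,\alpha,\beta,n,|g|_{L^\infty},\psi_0)$ independent of $m,\epsilon$, and substituting into the previous display gives \eqref{eqn:UniformEnergyEst} with $C=C_1+C\,|\mathbb{S}^1|\,t$.

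The only step with genuine content is the first one, namely extracting the linear-growth lower bound $\kappa|z|\le zG(z)+C$ from the tail decay \eqref{eqn:TailCondition} and the nondegeneracy \eqref{eqn:NontrivialityCondition} of $g'$; everything after that is bookkeeping. The one point that requires a little care is checking that the drift contribution $\mathbb{E}\intl_0^t(A\psi^{m,\epsilon},\psi^{m,\epsilon})_H\,ds$ is bounded uniformly in $m$ and $\epsilon$, which is precisely where the dissipativity hypothesis on $A$ and the $L^2$ a priori estimate of Proposition \ref{prop:Galerkin_Estimate} are used, and which I expect to be the main (though mild) obstacle.
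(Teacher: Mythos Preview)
Your strategy is exactly the paper's: extract the pointwise bound $\kappa|z|\le zG(z)+C$ from \eqref{eqn:TailCondition}--\eqref{eqn:NontrivialityCondition}, then control $\mathbb{E}\int_0^t\!\int_{\mathbb{S}^1}\frac{\psi_x^{m,\epsilon}}{\epsilon}G(\frac{\psi_x^{m,\epsilon}}{\epsilon})\,dx\,ds$ via the energy estimates. The pointwise computation is correct.

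There is one slip in your handling of the drift. The dissipativity hypothesis \eqref{eqn:DissipatCondition} is formulated in the $H^{1,2}$ pairing, not in $H=L^2$, so it does \emph{not} by itself say $(A\psi,\psi)_H\le C|\psi|_H^2$; with only \eqref{eqn:EnergyEstimate_L2} you cannot invoke it. The paper closes this by adding \eqref{eqn:EnergyEstimate_H1} to \eqref{eqn:EnergyEstimate_L2}: the sum carries $-2\,\mathbb{E}\!\int_0^t(A\psi^{m,\epsilon},\psi^{m,\epsilon})_{H^{1,2}}\,ds$ on the left, which by \eqref{eqn:DissipatCondition} dominates $2\beta\,\mathbb{E}\!\int_0^t|\psi^{m,\epsilon}|_{H^{1,2}}^2\,ds$ (the $\alpha$-term is nonnegative and can be dropped). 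If $\beta\ge 0$ you are done immediately; if $\beta<0$, Gronwall on $\mathbb{E}|\psi^{m,\epsilon}|_{H^{1,2}}^2$ gives the uniform bound and then substituting back controls the $zG(z)$ integral. This is why the paper cites both \eqref{eqn:EnergyEstimate_L2} and \eqref{eqn:EnergyEstimate_H1}. Once you make that correction, your argument matches the paper's.
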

\begin{proof}
By boundedness of $g$, dissipativity of $A$ \eqref{eqn:DissipatCondition} and a priori estimates \eqref{eqn:EnergyEstimate_L2}, \eqref{eqn:EnergyEstimate_H1} we have that
\begin{equation}
\mathbb{E}\intl_0^t\intl_{\mathbb{S}^1}\frac{\psi_x^{m,\epsilon}}{\epsilon} G(\frac{\psi_x^{m,\epsilon}}{\epsilon})\,dx\,ds\leq C(t,\alpha, \beta  , |g|_{L^{\infty}}).
\end{equation}
Hence we have that
\begin{equation}\label{eqn:AprioriMain_1}
\mathbb{E}\intl_0^t\intl_{\{\psi_x^{m,\epsilon}\geq 0\}}\frac{\psi_x^{m,\epsilon}}{\epsilon} G(\frac{\psi_x^{m,\epsilon}}{\epsilon})\,dx\,ds+
\mathbb{E}\intl_0^t\intl_{\{\psi_x^{m,\epsilon}<0\}}\frac{\psi_x^{m,\epsilon}}{\epsilon} G(\frac{\psi_x^{m,\epsilon}}{\epsilon})\,dx\,ds\leq C(t,\alpha, \beta , |g|_{L^{\infty}}).
\end{equation}
Consequently, condition \eqref{eqn:TailCondition} together with the estimate \eqref{eqn:AprioriMain_1} gives us that
\begin{align*}
\mathbb{E}\intl_0^t\intl_{\{\psi_x^{m,\epsilon}\geq 0\}}&\frac{\psi_x^{m,\epsilon}}{\epsilon}\int\limits_{0}^{\infty}|g'|^2(y)\,dy \,dx\,ds\\
&+
\mathbb{E}\intl_0^t\intl_{\{\psi_x^{m,\epsilon}< 0\}}-\frac{\psi_x^{m,\epsilon}}{\epsilon}\int\limits_{-\infty}^{0}|g'|^2(y)\,dy \,dx\,ds\leq C(t,\alpha, \beta , |g|_{L^{\infty}}),
\end{align*}
and the result follows. 
\end{proof}
The a priori estimates of Proposition \ref{prop:Galerkin_Estimate} are uniform w.r.t. both parameter $\epsilon$ and dimension $m$ of the approximation space $H_m$. The next a priori estimate will give us bound on fractional time derivative of the solution. The estimate is not uniform w.r.t. $\epsilon$.
\begin{lemma}\label{lem:FracDerivative}
For any $\epsilon>0$, $T>0$, $\alpha\in\left(0,\frac{1}{2}\right)$ there exists $C(\epsilon,T,\alpha)$ such that 
\begin{equation}
\mathbb{E}|\psi^{m,\epsilon}|_{H^{\alpha,2}(0,T;L^2(\mathbb{S}^1))}^2\leq C(\epsilon,T,\alpha).\label{eqn:TimeFracDer_Est}
\end{equation}
\end{lemma}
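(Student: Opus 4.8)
The plan is to derive the fractional-in-time Sobolev bound \eqref{eqn:TimeFracDer_Est} from the mild/integral form of the Galerkin equation \eqref{eqn:GalerkinApprox}, splitting $\psi^{m,\epsilon}$ into its drift part and its stochastic-integral part and estimating the two contributions separately. Write $\psi^{m,\epsilon}(t)=\pi_m\psi_0+\int_0^t \pi_m A^\epsilon(\pi_m\psi^{m,\epsilon})\,ds+\int_0^t\pi_m(\sigma^\epsilon(\pi_m\psi^{m,\epsilon})\,dW_s^Q)=:\pi_m\psi_0+I_1(t)+I_2(t)$. For the deterministic part $I_1$, note that $A^\epsilon(f)=A(f)+\frac{n|g'|^2(f_x/\epsilon)}{2\epsilon^2}f_{xx}$, so $|A^\epsilon(\pi_m\psi^{m,\epsilon})|_{H}\le |A\pi_m\psi^{m,\epsilon}|_H+\frac{n|g'|^2_{L^\infty}}{2\epsilon^2}|\psi^{m,\epsilon}_{xx}|_H$, and both terms are controlled in $L^2(0,T)$ (in expectation, not uniformly in $\epsilon$) by combining Proposition \ref{prop:Galerkin_Estimate}, specifically the $H^{1,2}$-estimate \eqref{eqn:EnergyEstimate_H1}, boundedness of $g$, and the dissipativity bound \eqref{eqn:DissipatCondition} which gives $\alpha|\psi^{m,\epsilon}|^2_{H^{2,2}}\le -(A\psi^{m,\epsilon},\psi^{m,\epsilon})_{H^{1,2}}-\beta|\psi^{m,\epsilon}|^2_{H^{1,2}}$. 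Since $t\mapsto I_1(t)$ is an absolutely continuous $H$-valued path with derivative in $L^2(0,T;H)$ (in $L^2(\Omega)$), it lies in $H^{1,2}(0,T;L^2(\mathbb S^1))\hookrightarrow H^{\alpha,2}(0,T;L^2(\mathbb S^1))$ for every $\alpha<1$, and in particular for $\alpha\in(0,1/2)$, with the constant depending on $\epsilon$ through the $\epsilon^{-2}$ factor.

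For the stochastic part $I_2$, the plan is to use the standard factorization / Kolmogorov-type estimate for stochastic convolutions: one shows directly that $\mathbb E|I_2(t)-I_2(s)|_H^2\le C|t-s|$ and then invokes the characterization of $H^{\alpha,2}(0,T;L^2)$ via the Gagliardo seminorm, namely $\mathbb E|I_2|^2_{H^{\alpha,2}(0,T;L^2)}\sim \mathbb E|I_2|^2_{L^2(0,T;L^2)}+\mathbb E\int_0^T\!\!\int_0^T \frac{|I_2(t)-I_2(s)|_H^2}{|t-s|^{1+2\alpha}}\,dt\,ds$, which is finite once $2\alpha<1$, i.e. $\alpha<1/2$ — this is exactly why the hypothesis $\alpha\in(0,1/2)$ appears. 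The increment bound follows from the It\^o isometry: $\mathbb E|I_2(t)-I_2(s)|_H^2=\mathbb E\int_s^t\sum_{i=1}^{2n+1}|\pi_m(\sigma^\epsilon(\pi_m\psi^{m,\epsilon})e_i)|_H^2\,dr\le (2n+1)|g|^2_{L^\infty}\,|t-s|$, using $|\pi_m|_{\mathcal L(H,H)}\le1$ and the uniform bound $|\sigma^\epsilon(f)|_{L^\infty}=|g(f_x/\epsilon)|_{L^\infty}\le|g|_{L^\infty}$. Note this particular bound is actually uniform in $\epsilon$, but the drift estimate is not, so the overall constant is $\epsilon$-dependent.

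Assembling the two pieces: $\mathbb E|\psi^{m,\epsilon}|^2_{H^{\alpha,2}(0,T;L^2)}\le C\big(\mathbb E|\psi_0|_H^2 + \mathbb E|I_1|^2_{H^{\alpha,2}} + \mathbb E|I_2|^2_{H^{\alpha,2}}\big)$, and each term on the right has been bounded by a constant depending only on $\epsilon,T,\alpha$ (and on $n,\alpha_{\mathrm{diss}},\beta,|g|_{L^\infty},|g'|_{L^\infty},\psi_0$, all of which are fixed). The main obstacle is the bookkeeping for $I_1$: one must be careful that the $\epsilon^{-2}$-weighted term $\frac{n|g'|^2(\psi_x^{m,\epsilon}/\epsilon)}{2\epsilon^2}\psi^{m,\epsilon}_{xx}$ really is square-integrable in time after taking expectations, which requires the full strength of the $H^{2,2}$-control extracted from \eqref{eqn:EnergyEstimate_H1} together with \eqref{eqn:DissipatCondition}; everything else (It\^o isometry, Gagliardo characterization of $H^{\alpha,2}$, Sobolev embedding in the time variable) is routine. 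A secondary point worth stating carefully is the equivalence of the fractional Sobolev norm with the Gagliardo seminorm for Hilbert-space-valued functions on a bounded interval, which is classical and can simply be cited.
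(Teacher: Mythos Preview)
Your proposal is correct and follows essentially the same route as the paper: split $\psi^{m,\epsilon}$ into initial datum, drift integral, and stochastic integral, bound the drift in $H^{1,2}(0,T;L^2)$ using the $H^{2,2}$-control coming from \eqref{eqn:EnergyEstimate_H1} (together with \eqref{eqn:EnergyEstimate_L2} and the dissipativity \eqref{eqn:DissipatCondition}), and bound the stochastic integral in $H^{\alpha,2}(0,T;L^2)$ for $\alpha<1/2$. The only cosmetic difference is that for the stochastic part the paper simply invokes Lemma~2.1 of \cite{FlandoliGatarek95}, whereas you redo that lemma by hand via the It\^o isometry increment bound $\mathbb E|I_2(t)-I_2(s)|_H^2\le C|t-s|$ and the Gagliardo seminorm; these are the same argument.
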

\begin{proof}
By definition \eqref{eqn:GalerkinApprox} of Galerkin approximation $\psi^{m,\epsilon}$ has representation
\[
\psi^{m,\epsilon}(t)=\psi^{m,\epsilon}(0)+\intl_0^t\pi_m(A^{\epsilon}(\pi_m\psi^{m,\epsilon}))\,ds+\intl_0^t\pi_m(\sigma^{\epsilon}(\pi_m\psi^{m,\epsilon})dW_s^Q).
\]
Now for any fixed $\epsilon>0$ the drift term is bounded in $L^2(\Omega,H^{1,2}(0,T;L^2(\mathbb{S}^1)))$ by a priori estimate \eqref{eqn:EnergyEstimate_H1}. Furthermore, diffusion term is bounded in 
$L^2(\Omega,H^{\alpha,2}(0,T;L^2(\mathbb{S}^1)))$ for any $\alpha\in\left(0,\frac{1}{2}\right)$ by Lemma 2.1, of \cite{FlandoliGatarek95}. 
\end{proof}
Now we are ready to converge $m$ to infinity in Galerkin approximation \eqref{eqn:GalerkinApprox} and show existence of strong solution of equation \eqref{Mainmodel} for any $\epsilon>0$.

\section{Main result}

\begin{proposition}\label{prop:ExistEpsPositive}
Assume that $g\in L^{\infty}(\mathbb{R})$, $g'\in L^2\cap L^{\infty}(\mathbb{R})$ satisfies conditions \eqref{eqn:TailCondition} and \eqref{eqn:NontrivialityCondition}.
Then there exist global strong solution $\psi^{\epsilon}$ of the system \eqref{Mainmodel_Itoform} and $C(t,\alpha,\beta,M,|g|_{L^{\infty}},\psi_0)>0$ such that 
\begin{equation}\label{eqn:UniformEnergyEst_2}
\intl_0^t\mathbb{E}|\psi_x^{\epsilon}|_{L^1}\,ds\leq 
C\frac{\epsilon}{\kappa},
\end{equation}
where $\kappa$ is defined by \eqref{eqn:NontrivialityCondition}.
In particular, we have that
\[
\limsup_{\epsilon\to 0}\intl_0^t\mathbb{E}|\psi_x^{\epsilon}|_{L^1}\,ds=0.
\]
\end{proposition}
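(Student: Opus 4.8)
The plan is the standard compactness/martingale-solution scheme (in the spirit of \cite{FlandoliGatarek95}): for each fixed $\epsilon>0$ I would pass to the limit $m\to\infty$ in the Galerkin system \eqref{eqn:GalerkinApprox}, using the $m$-uniform a priori bounds of Proposition \ref{prop:Galerkin_Estimate} and the $m$-uniform (but $\epsilon$-dependent) time-regularity bound of Lemma \ref{lem:FracDerivative}. The estimate \eqref{eqn:UniformEnergyEst_2} is then inherited from \eqref{eqn:UniformEnergyEst} by lower semicontinuity, and the $\limsup$ assertion is immediate.

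First I would collect the bounds needed for compactness. Adding \eqref{eqn:EnergyEstimate_L2} to \eqref{eqn:EnergyEstimate_H1}, discarding the nonnegative term $n\mathbb{E}\intl_0^t\intl_{\mathbb{S}^1}\tfrac{\psi_x^{m,\epsilon}}{\epsilon}G(\tfrac{\psi_x^{m,\epsilon}}{\epsilon})\,dx\,ds$, and using the dissipativity \eqref{eqn:DissipatCondition} together with $|g|\le|g|_{L^\infty}$, one gets, uniformly in $m$,
\[
\mathbb{E}\supl_{[0,T]}|\psi^{m,\epsilon}|_{H^{1,2}(\mathbb{S}^1)}^2+\mathbb{E}\intl_0^T|\psi^{m,\epsilon}|_{H^{2,2}(\mathbb{S}^1)}^2\,dt\le C(\epsilon,T),
\]
the supremum part following, as in part [1] of the proof of Proposition \ref{prop:Galerkin_Estimate}, from the Burkholder--Davis--Gundy and Gronwall argument there. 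Together with Lemma \ref{lem:FracDerivative}, which bounds $\mathbb{E}|\psi^{m,\epsilon}|_{H^{\alpha,2}(0,T;L^2(\mathbb{S}^1))}^2$ uniformly in $m$ for $\alpha\in(0,\tfrac12)$, and the compact embedding $H^{2,2}(\mathbb{S}^1)\embed H^{1,2}(\mathbb{S}^1)$, the Aubin--Lions--Simon lemma makes the laws of $\psi^{m,\epsilon}$ tight on $L^2(0,T;H^{1,2}(\mathbb{S}^1))$; the driving noise is finite dimensional and $m$-independent, hence trivially tight.

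Next I would apply Prokhorov's and Skorokhod's theorems to obtain, along a subsequence, a new stochastic basis carrying $\widetilde\psi^{m,\epsilon}\to\widetilde\psi^{\epsilon}$ and $\widetilde W^m\to\widetilde W^Q$ almost surely in these topologies, with laws unchanged; in particular $\widetilde\psi_x^{m,\epsilon}\to\widetilde\psi_x^{\epsilon}$ in $L^2((0,T)\times\mathbb{S}^1)$ a.s., hence almost everywhere along a further subsequence. Since $g'\in L^\infty(\mathbb{R})$ the function $g$ is Lipschitz, hence continuous, and $G$ is continuous with $|G|\le|g'|_{L^2}^2$; therefore $g(\widetilde\psi_x^{m,\epsilon}/\epsilon)\to g(\widetilde\psi_x^{\epsilon}/\epsilon)$ and $G(\widetilde\psi_x^{m,\epsilon}/\epsilon)\to G(\widetilde\psi_x^{\epsilon}/\epsilon)$ almost everywhere and boundedly, hence in every $L^p(\widetilde\Omega\times(0,T)\times\mathbb{S}^1)$, $p<\infty$. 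This suffices to pass to the limit in the weak formulation of \eqref{eqn:GalerkinApprox} tested against $\phi\in C^\infty(\mathbb{S}^1)$ --- there the drift correction enters through the integrated-by-parts term $-\tfrac n2\bigl(\tfrac{\phi_x}{\epsilon},G(\tfrac{\psi_x^{m,\epsilon}}{\epsilon})\bigr)$, which is precisely why compactness in $H^{1,2}$, rather than in $H^{2,2}$, is enough --- and to identify the limiting stochastic integral in the usual way, so that $\widetilde\psi^{\epsilon}$ is a weak solution of \eqref{Mainmodel_Itoform}. By weak lower semicontinuity the $m$-uniform $L^2(0,T;H^{2,2})$-bound passes to the limit, so $\widetilde\psi^{\epsilon}\in L^2(\widetilde\Omega;L^2(0,T;H^{2,2}(\mathbb{S}^1)))$; then the equation holds in $H$, its drift lies in $L^2(0,T;H)$, and standard parabolic regularity (Lions--Magenes interpolation for the deterministic part and the factorization method for the stochastic convolution) upgrades $\widetilde\psi^{\epsilon}$ to a strong solution with $\widetilde\psi^{\epsilon}\in C([0,\infty);H^{1,2}(\mathbb{S}^1))$. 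Since the solution concept here requires only \emph{some} stochastic basis, no pathwise uniqueness is needed. Finally \eqref{eqn:UniformEnergyEst} gives $\intl_0^t\mathbb{E}|\psi_x^{m,\epsilon}|_{L^1}\,ds\le C\epsilon/\kappa$ uniformly in $m$, so Fatou's lemma applied to the a.e.\ convergence of $\widetilde\psi_x^{m,\epsilon}$ yields \eqref{eqn:UniformEnergyEst_2}.

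I expect the only genuinely delicate point to be the compactness step: one has to extract convergence of the gradients $\psi_x^{m,\epsilon}$ in the \emph{spatial} $H^{1,2}$-topology, not merely in $L^2(\mathbb{S}^1)$ --- equivalently, almost-everywhere convergence of $\psi_x^{m,\epsilon}$ itself --- which is exactly what lets one pass to the limit in the nonlinearities $g(\cdot/\epsilon)$ and $G(\cdot/\epsilon)$. Everything downstream --- the Skorokhod construction, the identification of the limiting equation, the strong-solution regularity, and the $L^1$-bound --- is then routine given the a priori estimates of Proposition \ref{prop:Galerkin_Estimate} and Lemma \ref{lem:FracDerivative}.
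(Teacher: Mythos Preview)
Your proposal is correct and follows essentially the same route as the paper: $m$-uniform bounds from Proposition~\ref{prop:Galerkin_Estimate} and Lemma~\ref{lem:FracDerivative}, tightness in $L^2(0,T;H^{1,2}(\mathbb{S}^1))$ via the Flandoli--Gatarek compact embedding, Skorokhod, passage to the limit in the weak formulation using the integrated-by-parts representation $-\tfrac{n}{2}(\tfrac{\phi_x}{\epsilon},G(\tfrac{\psi_x}{\epsilon}))$ and the Lipschitz property of $G$, and then upgrading to a strong solution via the surviving $L^2(0,T;H^{2,2})$ bound. The only cosmetic difference is that the paper phrases the limit step through the martingale problem (defining $M^{m,\epsilon}$, verifying \eqref{martprop_1}--\eqref{martprop_2}, and invoking a martingale representation theorem), whereas you carry the finite-dimensional noise through Skorokhod and pass directly in the stochastic integral; both are standard variants of the same argument.
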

\begin{theorem}\label{thm:Mainthm_1}
Assume that $A^*(1)=0$ i.e. 
\begin{equation}\label{eqn:Nodrift}
\intl_{\mathbb{S}^1}A\phi\,dx=0,\,\,\forall\phi\in C^{\infty}(\mathbb{S}^1).
\end{equation} 
Then there exists a martingale $\psi\in L^2(\Omega, C([0,T],\mathbb{R}))$ such that for any $\phi\in C^{\infty}([0,T]\times\mathbb{S}^1)$ we have
\[\lim_{\epsilon\to 0}
\mathbb{E}\left|\int\limits_0^t\int\limits_{\mathbb{S}^1}(\psi^{\epsilon}(s,x,\cdot)-\psi(s,\cdot))\phi(s,x)\,dx\,ds\right|=0.
\]
Furthermore,
\[
\mathbb{E}\psi(t)=\frac{1}{2\pi}\int\limits_{\mathbb{S}^1}\psi_0(x)\,dx.
\]
\end{theorem}
\begin{remark}
It remains an open problem to find quadratic variation of $\psi$. 
\end{remark}
\begin{remark}
Assumption \eqref{eqn:Nodrift} in Theorem \ref{thm:Mainthm_1} is for simplicity. Otherwise, we would get in the limit $\epsilon\to 0$ martingale with additional drift term. The structure of the drift would depend on the exact form of the operator $A$.
\end{remark}

\section{Proofs of Proposition \ref{prop:ExistEpsPositive} and Theorem \ref{thm:Mainthm_1}}

\begin{proof}[Proof of Proposition \ref{prop:ExistEpsPositive}]
Let $\{\psi^{m,\epsilon}\}_{m\in\mathbb N,\epsilon>0}$ be Galerkin approximation introduced in \eqref{eqn:GalerkinApprox}. According to Proposition \ref{prop:Galerkin_Estimate} and Lemma \ref{lem:FracDerivative} we have following a priori estimate 
\[
\sup_{m\in\mathbb{N}}\left[\mathbb{E}|\psi^{m,\epsilon}|_{H^{\alpha,2}(0,T;L^2(\mathbb{S}^1))}^2+\mathbb{E}|\psi^{m,\epsilon}|_{L^2(0,T;H^{2,2}(\mathbb{S}^1))\cap C(0,T;H^{1,2}(\mathbb{S}^1))}^2\right]<\infty,\quad\alpha\in \left(0,\frac12\right)
\]
Space $L^2(0,T;H^{2,2}(\mathbb{S}^1))\cap H^{\alpha,2}(0,T;L^2(\mathbb{S}^1))$,  $0<\alpha<\frac12$ is compactly embedded in \\
$L^2(0,T;H^{1,2}(\mathbb{S}^1))$ by Theorem 2.1 from \cite{FlandoliGatarek95}. 
Consequently, family of probability laws
$\mathcal{L}(\psi^{m,\epsilon})$ is tight in $L^2(0,T;H^{1,2}(\mathbb{S}^1))$. Hence, there exists subsequence $\psi^{m,\epsilon}$ (denoted by the same letter) such that  $\mathcal{L}(\psi^{m,\epsilon})$ weakly converges in $L^2(0,T;H^{1,2}(\mathbb{S}^1))$ (for fixed $\epsilon>0$).

By the Skorokhod embedding theorem  (cf. \cite{IkWat81}, p.9) there exists stochastic basis $(\Omega, \mathcal{F},\{\mathcal{F}_t\}_{t\geq 0},\mathbb{P})$ and random variables $\widetilde{\psi}^{\epsilon}$, $\widetilde{\psi}^{m,\epsilon}$, $m\in\mathbb{N}$ such that $\widetilde{\psi}^{m,\epsilon}\to\widetilde{\psi}^{\epsilon}$ in $L^2(0,T;H^{1,2}(\mathbb{S}^1))$ $\mathbb{P}$-a.s. and we have that the probability laws of $\widetilde{\psi}^{m,\epsilon}$ and $\psi^{m,\epsilon}$ on $L^2(0,T;H^{1,2}(\mathbb{S}^1))$ are the same. Therefore, $\widetilde{\psi}^{m,\epsilon}$ satisfy the same a priori estimate as $\psi^{m,\epsilon}$. Consequently,
\begin{equation}
\widetilde{\psi}^{\epsilon}\in L^2(0,T;H^{2,2}(\mathbb{S}^1))\cap C(0,T;H^{1,2}(\mathbb{S}^1))\,\,\mathbb{P}\mbox{-a.s.},\label{regularity} 
\end{equation}
and $\widetilde{\psi}^{m,\epsilon}\to\widetilde{\psi}^{\epsilon}$ in $L^2(\Omega\times[0,T],H^{2,2}(\mathbb{S}^1))$ weakly. Define 
\[
M^{m,\epsilon}(t):=\widetilde{\psi}^{m,\epsilon}(t)-\pi_m\widetilde{\psi}^{m,\epsilon}(0)
-\intl_0^t\pi_m(A^{\epsilon}(\pi_m\widetilde{\psi}^{m,\epsilon}))\,dt, t\geq 0.
\]
Then $\{M^{m,\epsilon}\}_{t\geq 0}$ is a a square integrable martingale with respect to the filtration $(\mathcal{G}^{m,\epsilon})_t=\sigma(\{\widetilde{\psi}^{m,\epsilon}(s),s\leq t\})$ with quadratic variation 
\[
<<M^{m,\epsilon}>>(t)=\sum\limits_{i=1}^{\infty}q_i^2\intl_0^t|\pi_m(\sigma(\widetilde{\psi}^{m,\epsilon})e_i)|^2\,ds.
\]
Indeed, since the laws $\mathcal{L}(\widetilde{\psi}^{m,\epsilon})$ and 
$\mathcal{L}(\psi^{m,\epsilon})$ are the same we have that
for all $0\leq s\leq t$, $\la \in C_b(L^2([0,T),H^{2,2}(\mathbb{S}^1))),\phi,\gamma\in C^{\infty}(\mathbb{S}^1)$
\begin{equation}\label{martprop_1}
\mathbb{E}[(M^{m,\epsilon}(t)-M^{m,\epsilon}(s),\phi)\la(\widetilde{\psi}^{m,\epsilon}|_{[0,s]})]=0,
\end{equation}
and
\begin{equation}\label{martprop_2}
\begin{aligned}
\mathbb{E}\la(\widetilde{\psi}^{m,\epsilon}|_{[0,s]})[(M^{m,\epsilon}(t),\phi)(M^{m,\epsilon}(t),\gamma)&-(M^{m,\epsilon}(s),\phi)(M^{m,\epsilon}(s),\gamma)\\
&-\sum\limits_{i=1}^{\infty}q_i^2\intl_s^t(\pi_m(\sigma(\widetilde{\psi}^{m,\epsilon})e_i)\phi,\pi_m(\sigma(\widetilde{\psi}^{m,\epsilon})e_i)\gamma)\,ds]=0.
\end{aligned}
\end{equation}
It remains to take the limit $m\to\infty$ in equalities \eqref{martprop_1} and \eqref{martprop_2}.  By a priori estimates \eqref{eqn:EnergyEstimate_H1},\eqref{eqn:EnergyEstimate_L2}, all terms in equalities \eqref{martprop_1} and \eqref{martprop_2} are uniformly integrable w.r.t. $\omega$. Thus we need to show convergence $\mathbb{P}$-a.s.. Notice that for any test function $\phi \in C^{\infty}(\mathbb{S}^1)$ the drift term $\left(\intl_0^t\pi_m(A^{\epsilon}(\widetilde{\psi}^{m,\epsilon}))\,ds,\phi\right)_{L^2}$ can be rewritten as follows
\begin{equation}\label{eqn:aux_9}
\left(\intl_0^t\pi_m(A^{\epsilon}(\widetilde{\psi}^{m,\epsilon}))\,ds,\phi\right)_{L^2}=-\frac{M}{2}\intl_0^t(\frac{\pi_m\phi_x}{\epsilon},G(\frac{\widetilde{\psi}^{m,\epsilon}}{\epsilon}))_{L^2}\,ds+\intl_0^t\left(\widetilde{\psi}^{m,\epsilon},A^*\phi\right)_{L^2}\,ds,
\end{equation}
where $G$ is given by \eqref{eqn:DefG}. Indeed, representation \eqref{eqn:aux_9} follows from  integration by parts. Consequently, convergence of the RHS term in \eqref{eqn:aux_9} follows from global Lipshitz property of function $G$. Similarly, we can show convergence of quadratic variation. 
Now the existence of weak solution follows from representation Theorem for martingales (Theorem 8.2, p. 220 \cite{DZ1992} ). The weak solution is a strong one by the regularity property \eqref{regularity} and integration by parts formula. The identity \eqref{eqn:UniformEnergyEst_2} follows from identity \eqref{eqn:UniformEnergyEst}.
\end{proof}
\begin{proof}[Proof of Theorem \ref{thm:Mainthm_1}]
We can represent $\psi^{\epsilon}$ as follows
\[
\psi^{\epsilon}=\left(\psi^{\epsilon}-\frac{1}{2\pi}\int\limits_{\mathbb{S}^1}\psi^{\epsilon}\,dx\right)+\frac{1}{2\pi}\int\limits_{\mathbb{S}^1}\psi^{\epsilon}\,dx.
\]
Let 
\[
\chi(x):=\intl_0^x\phi(y)\,dy-\frac{x}{2\pi}\intl_0^{2\pi}\phi(y)\,dy, x\in [0,2\pi).
\] 
Note that $\chi_x=\phi-\frac{1}{2\pi}\intl_0^{2\pi}\phi(y)\,dy$.
Consequently, we have by integration by parts that
\begin{align*}
\Big| &\int\limits_0^T\int\limits_{\mathbb{S}^1}\big(\psi^{\epsilon}-\frac{1}{2\pi}\int\limits_{\mathbb{S}^1}\psi^{\epsilon}\,dx\big)\phi\,dx\,ds\Big| \\
&=\left|\int\limits_0^T\int\limits_{\mathbb{S}^1}\left(\psi^{\epsilon}-\frac{1}{2\pi}\int\limits_{\mathbb{S}^1}\psi^{\epsilon}\,dx\right)\chi_x\,dx\,ds\right|\\
&=\left|\int\limits_0^T\int\limits_{\mathbb{S}^1}\psi_x^{\epsilon}\chi\,dx\,ds\right|\leq ||\chi||_{L^{\infty}([0,T]\times \mathbb{S}^1)}\int\limits_0^T\int\limits_{\mathbb{S}^1}|\psi_x^{\epsilon}|\,dx\,ds
\end{align*}
which converges to $0$ by Proposition \ref{prop:ExistEpsPositive}.
Hence it remains to find the limit of $\epsilon$ converging to zero of $M^{\epsilon}(t):=\frac{1}{2\pi}\int\limits_{\mathbb{S}^1}\psi^{\epsilon}\,dx,t\geq 0$.
First let us notice that we have the following representation of $M^{\epsilon}$:  
\begin{equation}\label{eqn:MartRepr}
M^{\epsilon}(t)=\frac{1}{2\pi}\int\limits_{\mathbb{S}^1}\psi_0(x)\,dx+\frac{1}{2\pi}\int\limits_0^t\int\limits_{\mathbb{S}^1}g\left(\frac{\psi^{\epsilon}_x}{\epsilon}\right) \,dx\,dW_s^Q,\quad t\geq 0,
\end{equation}
where we have used assumption \eqref{eqn:Nodrift} to cancel the drift part.
Thus we get that $M_{\epsilon}$ is a sequence of square integrable martingales and by the Burkholder-Davies-Gundy inequality 
\[
\sup\limits_{\epsilon>0}\mathbb{E}\sup\limits_{t\in [0,T]}|M^{\epsilon}(s)|^p<\infty,\quad p\geq 1.
\]
Furthermore, we can deduce from representation \eqref{eqn:MartRepr} that
\[
\sup\limits_{\epsilon>0}\mathbb{E}|M^{\epsilon}|_{W^{\alpha,p}([0,T],\mathbb{R})}^p<\infty, \quad \alpha\in (0,\frac{1}{2})\,,p>1.
\]
Hence, by compact embedding theorem we have that the Martingale sequence $M^{\epsilon}$ is tight in $C([0,T],\mathbb{R})$. Consequently, by the Prokhorov Theorem it converges in law to the process $\psi$ in $C([0,T],\mathbb{R})$. In particular,
\[
\mathbb{E}\left|\int\limits_0^T M^{\epsilon}(s)\int\limits_{\mathbb{S}^1}\phi(s,x)\,dx\,ds-\int\limits_0^T \psi(s)\int\limits_{\mathbb{S}^1}\phi(s,x)\,dx\,ds\right|\to 0,\quad\epsilon\to 0.
\]
The process $\psi$ is a square integrable martingale (See, for instance proposition 1.12, Chapter 9 of \cite{JacodShiryaev2003}) with expectation $\mathbb{E}\psi(t)=\frac{1}{2\pi}\int\limits_{\mathbb{S}^1}\psi_0(x)\,dx$.
\end{proof}

\section{Examples and counterexamples}\label{sec:counterexample}

Here we will consider several examples of $g$ which does not satisfy conditions of the Theorem \ref{thm:Mainthm_1}. 
\begin{enumerate}
\item $g(z)=|z|^{\gamma}, \gamma>1$. 
\begin{proposition}\label{prop:Mainthm_2}
Assume that $g(z)=|z|^{\gamma}, \gamma>1$, $A$ satisfies assumption \eqref{eqn:DissipatCondition}. 
Then there exist global strong solution $\psi^{\epsilon}$ of the system \eqref{Mainmodel_Itoform} and $C=C(t,\alpha,\beta,\gamma,n,|g|_{L^{\infty}},\psi_0)>0$ such that 
\begin{equation}\label{eqn:UniformEnergyEst_2a}
\intl_0^t\mathbb{E}|\psi_x^{\epsilon}|_{L^{2\gamma}}^{2\gamma}\,ds\leq 
C \epsilon^{2\gamma}.
\end{equation}
In particular, we have that
\[
\limsup_{\epsilon\to 0}\intl_0^t\mathbb{E}|\psi_x^{\epsilon}|_{L^{2\gamma}}^{2\gamma}\,ds=0.
\]
\end{proposition}
\begin{proof}
The proof is quite similar to the proof of proposition \ref{prop:ExistEpsPositive}. So we will explain here only the differences.  Define operator 
$A^{\epsilon}: \mathbb{H}^{2,4}(\mathbb{S}^1)\cap \mathbb{H}^{1,4\gamma-4}\to H,\quad  A^{\epsilon}(f):=A(f)+\dfrac{M\gamma^2f_x^{2\gamma-2}}{2\epsilon^{2\gamma}}f_{xx}$ and we can conclude existence of local solution $\psi^{m,\epsilon}$ as before. Then It\^o formula allow us to deduce that
\begin{align}
\mathbb{E}|\psi^{m,\epsilon}(t)|_{H}^2&-2\mathbb{E}\intl_0^t(A\psi^{m,\epsilon},\psi^{m,\epsilon})_{H}\,ds+M\frac{(\gamma -1)^2}{2\gamma -1}\mathbb{E}\intl_0^t\intl_{\mathbb{S}^1}\frac{(\psi_x^{m,\epsilon})^{2\gamma}}{\epsilon^{2\gamma}}\,dx\,ds\nonumber\\
&\leq\mathbb{E}|\psi_0^{m,\epsilon}|_{H}^2,m\in \mathbb{N}.\label{eqn:EnergyEstimate_L2_ca}
\end{align}
Consequently, we can deduce a priori estimate for higher order norm i.e. we get 
\begin{align}
\mathbb{E}|\psi_x^{m,\epsilon}|_H^2(t)&-2 \mathbb{E}\intl_0^t((A\psi^{m,\epsilon})_x,\psi_x^{m,\epsilon})_H\,ds\nonumber\\
&\leq
\mathbb{E}|\psi_{0x}^{m,\epsilon}|_H^2+\frac{(2\gamma -1)M_2}{M(\gamma -1)^2}\mathbb{E}|\psi_0^{m,\epsilon}|_{H}^2,m\in \mathbb{N}.\label{eqn:EnergyEstimate_H1_ca}
\end{align}
Now we have uniform in $m,\epsilon$ estimates for $\psi^{m,\epsilon}$ and homogenisation inequality (in \eqref{eqn:EnergyEstimate_L2_ca}).  
\end{proof}
\item $g(z)=z$. In this case homogenisation doesn't hold  as following elementary example shows. Let $A=\partial^2_{xx}$ with periodic boundary conditions and assume that noise $W^Q=\beta$ is a one dimensional Wiener process. Then system 
\eqref{Mainmodel_Itoform} has a unique solution of the form $\psi^{\epsilon}(t,x)=\psi_0(x+\frac{\beta(t)}{\epsilon}),t\geq 0, x\in\mathbb{S}^1$. Consequently, integral $\intl_0^t\mathbb{E}|\psi_x^{\epsilon}|_{L^1}\,ds$ does not depend upon $\epsilon$.
\item $g(z)=\sin{z}$.
\begin{proposition}\label{prop:Mainthm_3}
Assume that $g(z)=\sin{z}$, $A$ satisfies assumption \eqref{eqn:DissipatCondition}. 
Then there exist global strong solution $\psi^{\epsilon}$ of the system \eqref{Mainmodel_Itoform} and $C=C(t,\alpha,\beta,\gamma,M,|g|_{L^{\infty}},\psi_0)>0$ such that 
\begin{equation}\label{eqn:UniformEnergyEst_2b}
\intl_0^t\mathbb{E}|\psi_x^{\epsilon}|_{L^{2}}^{2}\,ds\leq 
C \epsilon^{2}.
\end{equation}
In particular, we have that
\[
\limsup_{\epsilon\to 0}\intl_0^t\mathbb{E}|\psi_x^{\epsilon}|_{L^{2}}^{2}\,ds=0.
\]
\end{proposition}
\begin{proof}
The result follows from It\^o formula and energy estimate.
\end{proof}
\end{enumerate}
\section{Appendix}\label{sec:ItoCorrectionTerm}
In the appendix we formally calculate It\^o correction term for equation \eqref{Mainmodel}. From \eqref{Mainmodel} we have 
\begin{align}
\frac{1}{2}<g(\frac{\psi_x^{\epsilon}}{\epsilon}), W^Q>_t&=\frac{1}{2}<\intl_{0}^{\cdot}\frac{1}{\epsilon}g'(\frac{\psi_x^{\epsilon}}{\epsilon})d\psi_x^{\epsilon},W^Q>_t\nonumber\\
&=\frac{1}{2}<\intl_{0}^{\cdot}\frac{1}{\epsilon}g'(\frac{\psi_x^{\epsilon}}{\epsilon})(g'(\frac{\psi_x^{\epsilon}}{\epsilon})\frac{\psi_{xx}^{\epsilon}}{\epsilon}dW^Q+g(\frac{\psi_x^{\epsilon}}{\epsilon})dW_x^Q), W^Q>_t\nonumber\\
&=\intl_{0}^{t}\frac{1}{2\epsilon^2}|g'|^2(\frac{\psi_x^{\epsilon}}{\epsilon})\psi_{xx}^{\epsilon}\rho^{Q}(x)ds+\intl_{0}^{t}\frac{1}{4\epsilon}g'g(\frac{\psi_x^{\epsilon}}{\epsilon})\rho_x^{Q}(x)ds,t\geq 0\label{eqn:ItoCorr_1}
\end{align}
where
\[
\rho^Q=\sum\limits_{n=1}^{\infty}q_k^2e_k^2.
\]
Note that we can rewrite $\rho^Q$ as follows
\[
\rho^Q(x)=\frac{1}{2\pi}+\frac{1}{\pi}\suml_{n=1}^{\infty}q_{2n+1}^2\cos^2{nx}+q_{2n}^2\sin^2{nx}=\frac{1}{2\pi}\sum\limits_{n=1}^{\infty}q_n^2
+\sum\limits_{n=1}^{\infty}(q_{2n+1}^2-q_{2n}^2)\cos{2nx}, x\in[0,2\pi).
\]
Consequently, condition \eqref{eqn:QCondition} implies that  
\begin{equation}
\rho^Q=\frac{1}{2\pi}\sum\limits_{n=1}^{\infty}q_n^2:=M,\, \rho_x^{Q}=0.\label{eqn:ItoCorr_2}
\end{equation}
Combining equalities \eqref{eqn:ItoCorr_1} and \eqref{eqn:ItoCorr_2} we get 
\begin{equation}
\frac{1}{2}<g(\frac{\psi_x^{\epsilon}}{\epsilon}), W^Q>_t=\intl_{0}^{t}\frac{M|g'|^2(\frac{\psi_x^{\epsilon}}{\epsilon})}{2\epsilon^2}\psi_{xx}^{\epsilon}ds
\end{equation}

{\bf Acknowledgment:} We thank Z. Brz\'{e}zniak and M. R\"{o}ckner for useful discussions and attention to the work. This work was supported by the ARC Discovery grant DP120101886.


\begin{thebibliography}{99}
\bibitem{Berkov2007} Berkov, D., {\em Magnetization dynamics including thermal fluctuations.} In: Kronmüller, H., Parkin, S. (eds.) Handbook of Magnetism and Advanced Magnetic Materials, vol. 2. Wiley, Chichester, 2007.

\bibitem{DZ1992} Da Prato, G., Zabczyk, J. Stochastic equations in infinite dimensions. Encyclopedia of Mathematics and its Applications, 44. Cambridge University Press, Cambridge, 1992.

\bibitem{Flandoli2010} Flandoli, F.
Random perturbation of PDEs and fluid dynamic models. Lecture Notes in Mathematics, Vol. 2015. Springer, Heidelberg, 2011. 

\bibitem{FlandoliGatarek95}
Flandoli, F., Gatarek, D., {\em Martingale and stationary solutions for stochastic Navier-Stokes equations.} Probab. Theory Related Fields \textbf{102}, no. 3, 367-391 (1995).
\bibitem{FGP2010}  Flandoli, F., Gubinelli, M., Priola, E., {\em Well-posedness of the transport equation by stochastic perturbation.} Invent. Math. \textbf{180}, no. 1, 1–53 (2010).

\bibitem{JacodShiryaev2003}  Jacod, J., Shiryaev, A. N. Limit theorems for stochastic processes. Second edition. Grundlehren der Mathematischen Wissenschaften, 288. Springer-Verlag, Berlin, 2003.
\bibitem{IkWat81}
Ikeda, N., Watanabe, S., Stochastic Differential Equations and Diffusion Processes. North-Holland, Amsterdam, 1981.
\bibitem{GG2016} Gassiat, P., Gess, B., {\em Regularization by noise for stochastic Hamilton-Jacobi equations.}	arXiv:1609.07074

\bibitem{GS-CPAM-2017} Gess, B., Souganidis, Panagiotis E., {\em Long-time behavior, invariant measures, and regularizing effects for stochastic scalar conservation laws.} Comm. Pure Appl. Math. \textbf{70}, no. 8, 1562--1597 (2017). 

\bibitem{GS-SPA-2017} 
Gess, B., Souganidis, Panagiotis E., {\em Stochastic non-isotropic degenerate parabolic-hyperbolic equations.} Stochastic Process. Appl. \textbf{127}, no. 9, 2961--3004 (2017). 

\end{thebibliography}
\end{document}